\begin{document}
\newtheorem{defn}{Definition}[section]
\newtheorem{thm}{Theorem}[section]
\newtheorem{prop}{Proposition}[section]
\newtheorem{exam}{Example}[section]
\newtheorem{cor}{Corollary}[section]
\newtheorem{rem}{Remark}[section]
\newtheorem{lem}{Lemma}[section]
\newcommand{\CC}{\mathbb{C}}
\newcommand{\KK}{\mathbb{K}}
\newcommand{\ZZ}{\mathbb{Z}}
\def\a{{\alpha}}
\def\b{{\beta}}
\def\d{{\delta}}
\def\g{{\gamma}}
\def\l{{\lambda}}
\def\gg{{\mathfrak g}}
\def\cal{\mathcal }

\title{ On the description of Leibniz superalgebras of nilindex $n+m$}
\author{L.M. Camacho, J. R. G\'{o}mez, A.Kh. Khudoyberdiyev and B.A. Omirov}
\address{[L.M. Camacho -- J.R. G\'{o}mez] Dpto. Matem\'{a}tica Aplicada I.
Universidad de Sevilla. Avda. Reina Mercedes, s/n. 41012 Sevilla.
(Spain)} \email{lcamacho@us.es --- jrgomez@us.es}
\address{[A.Kh. Khudoyberdiyev -- B.A. Omirov] Institute of Mathematics and
Information Technologies of Academy of Uzbekistan, 29, F.Hodjaev
srt., 100125, Tashkent (Uzbekistan)} \email{khabror@mail.ru ---
omirovb@mail.ru}

\thanks{The last author was supported by grant NATO-Reintegration ref.
CBP.EAP.RIG.983169}

\maketitle
\begin{abstract}
In this work we investigate the complex Leibniz superalgebras with
characteristic sequence $(n_1,\dots,n_k|m)$ and nilindex $n+m$,
where $n=n_1+\cdots+n_k,$ $n$ and $m$ ($m\neq 0$) are dimensions
of even and odd parts, respectively. Such superalgebras with
condition $n_1 \geq n-1$ were classified in
\cite{FilSup}--\cite{C-G-O-Kh}. Here we prove that in the case
$n_1 \leq n-2$ the Leibniz superalgebras have nilindex less than
$n+m.$ Thus, we get the classification of Leibniz superalgebras
with characteristic sequence $(n_1,\dots,n_k|m)$ and nilindex
$n+m.$
\end{abstract} \maketitle

\textbf{Mathematics Subject Classification 2000}: 17A32, 17B30,
17B70, 17A70.

\textbf{Key Words and Phrases}: Lie superalgebras, Leibniz
superalgebras, nilindex, characteristic sequence, naturally
gradation.

\section{Introduction}

The paper is devoted to the study of the nilpotent Leibniz
superalgebras. The notion of Leibniz superalgebras appears
comparatively recently \cite{Alb}, \cite{Liv}. Leibniz
superalgebras are generalizations of Leibniz algebras \cite{Lod}
and, on the other hand, they naturally generalize well-known Lie
superalgebras. The elementary properties of Leibniz superalgebras
were obtained in \cite{Alb}.

In the work \cite{G-K-N} the Lie superalgebras with maximal
nilindex were classified. The distinguishing property of such
superalgebras is that they are two-generated and its nilindex
equal to $n+m$ (where $n$ and $m$ are dimensions of even and odd
parts, respectively). In fact, there exists unique Lie
superalgebra of the maximal nilindex and its characteristic
sequence is equal to $(1,1 |m).$ This superalgebra is filiform Lie
superalgebra (the characteristic sequence equal to $(n-1,1 | m)$)
and we mention about paper \cite{2007Yu}, where some crucial
properties of filiform Lie superalgebras are given.

In case of Leibniz superalgebras the property of maximal nilindex
is equivalent to the property of single-generated superalgebras
and they are described in \cite{Alb}. However, the description of
Leibniz superalgebras of nilindex $n+m$ is an open problem and it
needs to solve many technical tasks. Therefore, they can be
studied by applying restrictions on their characteristic
sequences. In the present paper we consider Leibniz superalgebras
with characteristic sequence $(n_1, \dots, n_k | m)$ and nilindex
$n+m.$ Since such superalgebras in the case $n_1 \geq n-1$ have
been already classified in \cite{FilSup}-\cite{C-G-O-Kh} we need
to study the case $n_1 \leq n-2.$

In fact, in the previous cases (cases where $n_1 \geq n-1$) due to
work \cite{AO1} we have used some information on the structure of
even part of the superalgebra and it played the crucial role in
that classifications. In the rest case (case where $n_1 \leq n-2$)
the structure of even part is unknown, but we used the properties
of natural gradation and the naturally graded basis (so-called
adapted basis) of even part of the superalgebra.

All over the work we consider spaces over the field of complex
numbers. By asterisks $(*)$ we denote the appropriate coefficients
at the basic elements of superalgebra.

\section{Preliminaries}
Recall the notion of Leibniz superalgebras.

\begin{defn} A $\mathbb{Z}_2$-graded vector space $L=L_0\oplus
L_1$ is called a Leibniz superalgebra if it is equipped with a
product $[-, -]$ which satisfies the following conditions:

1. $[L_\alpha,L_\beta]\subseteq L_{\alpha+\beta(mod\ 2)},$

2. $[x, [y, z]]=[[x, y], z] - (-1)^{\alpha\beta} [[x, z], y]-$
  Leibniz superidentity,\\
 for all $x\in L,$ $y \in L_\alpha,$ $z \in
L_\beta$ and $\alpha,\beta\in \mathbb{Z}_2.$
\end{defn}

Evidently, even part of the Leibniz superalgebra is a Leibniz
algebra.

The vector spaces $L_0$ and $L_1$ are said to be even and odd
parts of the superalgebra $L$, respectively.

Note that if in Leibniz superalgebra $L$ the identity
$$[x,y]=-(-1)^{\alpha\beta} [y,x]$$ holds for any $x \in
L_{\alpha}$ and $y \in L_{\beta},$ then the Leibniz superidentity
can be transformed into the Jacobi superidentity. Thus, Leibniz
superalgebras are a generalization of Lie superalgebras and
Leibniz algebras.

The set of all Leibniz superalgebras with the dimensions of the
even and odd parts, respectively equal to $n$ and $m$, we denote
by $Leib_{n,m}.$

For a given Leibniz superalgebra $L$ we define the descending
central sequence as follows:
$$
L^1=L,\quad L^{k+1}=[L^k,L], \quad k \geq 1.
$$

\begin{defn} A Leibniz superalgebra $L$ is called
nilpotent, if there exists  $s\in\mathbb N$ such that $L^s=0.$ The
minimal number $s$ with this property is called nilindex of the
superalgebra $L.$
\end{defn}

The following theorem describes nilpotent Leibniz superalgebras with
maximal nilindex.
\begin{thm} \label{t1} \cite{Alb} Let $L$  be a Leibniz superalgebra
of $Leib_{n,m }$ with nilindex equal to $n+m+1.$ Then $L$ is
isomorphic to one of the following non-isomorphic superalgebras:
$$
[e_i,e_1]=e_{i+1},\ 1\le i\le n-1, \ m=0;\quad \left\{ \begin{array}{ll} [e_i,e_1]=e_{i+1},& 1\le i\le n+m-1, \\
{[}e_i,e_2{]}=2e_{i+2}, & 1\le i\le n+m-2,\\ \end{array}\right.
$$
(omitted products are equal to zero).
\end{thm}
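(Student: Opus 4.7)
The plan is to exploit the fact that nilindex $n+m+1$ is the maximum possible for an $(n+m)$-dimensional superalgebra, and that this forces $L$ to be \emph{single-generated} in a strong sense, after which the isomorphism type is pinned down by a case analysis on the parity of the generator.

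First I would observe that since $L = L^1 \supsetneq L^2 \supsetneq \cdots \supsetneq L^{n+m} \supsetneq L^{n+m+1} = 0$ is a strictly decreasing chain of $n+m+1$ subspaces inside an $(n+m)$-dimensional space, necessarily $\dim L^k/L^{k+1}=1$ for every $1 \le k \le n+m$. In particular $\dim L/L^2 = 1$, so $L$ is generated by a single element $x$. Because $L^2$ is a $\ZZ_2$-graded ideal, the quotient $L/L^2$ is homogeneous, hence $x$ may be chosen homogeneous — either $x \in L_0$ or $x \in L_1$. The Leibniz superidentity, written as $[[a,b],c] = [a,[b,c]] + (-1)^{\b\g}[[a,c],b]$, lets every nested bracket be rewritten in left-normed form, so the iterated right multiplications $e_1 := x$, $e_{k+1} := [e_k, x] = R_x^{\,k}(x)$ must already span $L$; by the dimension count they form a basis.

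In the even case $x \in L_0$, every $e_k$ lies in $L_0$, so $L = L_0$ and $m=0$; the multiplication table reduces to $[e_i,e_1]=e_{i+1}$, which is exactly the first family of the theorem. In the odd case $x \in L_1$, induction on $k$ gives $e_k \in L_{k \,\mathrm{mod}\,2}$, and the balance between even and odd basis vectors forces $m \in \{n, n+1\}$. A direct application of the Leibniz superidentity with $a=b=c=e_1$ yields
\[
[e_1,e_2] = [e_1,[e_1,e_1]] = 2[e_1,e_1] = 2e_3 .
\]
Then I would establish $[e_i,e_2]=2e_{i+2}$ by induction on $i$: expanding $[e_{i+1},e_2]=[[e_i,e_1],e_2]$ by Leibniz gives a relation involving $[e_i,e_3]$, while expanding $[e_i,e_3]=[e_i,[e_2,e_1]]$ gives a second relation; solving the two-by-two linear system in the unknowns $[e_{i+1},e_2]$ and $[e_i,e_3]$ yields the claimed value (the factor $2$ is essential for the system to be nondegenerate). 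Finally, every remaining bracket $[e_i,e_j]$ with $j \ge 3$ is reduced to combinations of brackets against $e_1$ and $e_2$ by repeated application of the Leibniz superidentity and the relation $e_j = [e_{j-1},e_1]$, and one checks that the resulting values are consistent with $[e_i,e_1]=e_{i+1}$ and $[e_i,e_2]=2e_{i+2}$ and force all other products to vanish.

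The main obstacle I expect is the odd-generator case: not the existence of the bracket table (which is essentially imposed by single-generatedness and Leibniz), but the \emph{consistency} verification — showing that the brackets $[e_i,e_j]$ for $j\ge 3$ extracted via the superidentity agree with the linear relations already forced among the $e_k$, and that no extra structure constants can appear. This is where the coefficient $2$ in $[e_i,e_2]=2e_{i+2}$ plays a crucial role; a careless bookkeeping of the sign $(-1)^{\a\b}$ would either over- or under-determine the system. Once consistency is verified the two isomorphism classes listed in the theorem are the only possibilities, and both are readily checked to satisfy the Leibniz superidentity, completing the proof.
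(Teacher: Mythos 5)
Your proposal is correct, and it follows the standard route; note that the paper itself does not prove this theorem but quotes it from \cite{Alb}, recording only the key observation you start from (maximal nilindex is equivalent to being single-generated, cf.\ the Remark after the theorem), so there is no internal proof to compare against. Two minor points: the intermediate step $[e_1,[e_1,e_1]]=2[e_1,e_1]$ should read $2[[e_1,e_1],e_1]=2[e_2,e_1]$, and your two-by-two linear system for the induction on $[e_i,e_2]$ is unnecessary --- expanding $[e_i,e_2]=[e_i,[e_1,e_1]]$ directly with $y=z=e_1$ odd gives $[[e_i,e_1],e_1]-(-1)^{1\cdot 1}[[e_i,e_1],e_1]=2[e_{i+1},e_1]=2e_{i+2}$ in one line, after which $[e_i,e_3]=[e_i,[e_2,e_1]]=[[e_i,e_2],e_1]-[[e_i,e_1],e_2]=2e_{i+3}-2e_{i+3}=0$ and all products $[e_i,e_j]$ with $j\ge 3$ vanish by the same induction, which settles the consistency issue you flag as the main obstacle.
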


\begin{rem} {\em
From the assertion of Theorem \ref{t1} we have that in case of
non-trivial odd part $L_1$ of the superalgebra $L$ there are two
possibility for $n$ and $m$, namely, $m=n$ if $n+m$ is even and
$m=n+1$ if $n+m$ is odd. Moreover, it is clear that the Leibniz
superalgebra has the maximal nilindex if and only if it is
single-generated.}
\end{rem}

Let $L=L_0\oplus L_1$ be a nilpotent Leibniz superalgebra. For an
arbitrary element $x\in L_0,$ the operator of right multiplication
$R_x$ defined as $R_x(y)=[y,x]$ is a nilpotent endomorphism of the
space $L_i,$ where $i\in \{0, 1\}.$ Taking into account the
property of complex field we can consider the Jordan form for
$R_x.$ For operator $R_x$ denote by $C_i(x)$ ($i\in \{0, 1\}$) the
descending sequence of its Jordan blocks dimensions. Consider the
lexicographical order on the set $C_i(L_0)$.

\begin{defn} \label{d4}A sequence
$$C(L)=\left( \left.\max\limits_{x\in L_0\setminus L_0^2} C_0(x)\
\right|\ \max\limits_{\widetilde x\in L_0\setminus L_0^2}
C_1\left(\widetilde x\right) \right) $$ is said to be the
characteristic sequence of the Leibniz superalgebra $L.$
\end{defn}

Similarly to \cite{GL} (corollary 3.0.1) it can be proved that the
characteristic sequence is invariant under isomorphism.

Since Leibniz superalgebras from $Leib_{n,m}$ with characteristic
sequence equal to $(n_1, \dots, n_k| m)$ and nilindex $n+m$ are
already classified for the case $n_1 \geq n-1,$ henceforth we
shall reduce our investigation to the case of $n_1 \leq n-2.$

From the Definition \ref{d4} we conclude that a Leibniz algebra
$L_0$ has characteristic sequence $(n_1, \dots, n_k).$ Let $s \in
\mathbb{N}$ be a nilindex of the Leibniz algebra $L_0.$ Since $n_1
\leq n-2$ then we have $s \leq n-1$ and Leibniz algebra $L_0$ has
at least two generators (the elements which belong to the set
$L_0\setminus L_0^2$).

For the completeness of the statement below we present the
classifications of the papers \cite{FilSup}, \cite{C-G-O-Kh} and
\cite{G-K-N}.

$Leib_{1,m}:$
$$\small
\left\{\begin{array}{l} [y_i,x_1]=y_{i+1}, \ \ 1\leq i \leq m-1,
\end{array}\right.\\
\\$$

\

$Leib_{n,1}:$

$$ \small\left\{\begin{array}{ll} [x_i,x_1]=x_{i+1},& 1 \leq i \leq n-1,\\{}
[y_1,y_1]=\alpha x_n, & \alpha = \{0, \ 1\},\end{array}\right.$$

\

$Leib_{2,2}:$
$$\small\begin{array}{ll}
\left\{\begin{array}{l} [y_1,x_1]=y_2, \\ {[}x_1,y_1]=\displaystyle \frac12 y_2, \\[2mm] {[}x_2,y_1]=\displaystyle
y_2, \\[2mm] [y_1,x_2] = 2y_2, \\ {[}y_1,y_1]=x_2, \\
\end{array}\right.&
 \left\{\begin{array}{l}
[y_1,x_1]=y_2, \\ {[}x_2,y_1]=\displaystyle y_2, \\[2mm] {[}y_1,x_2]= 2y_2, \\ {[}y_1,y_1]=x_2, \\
\end{array}\right.
\end{array}$$

\

$Leib_{2,m}:$
$$\small\begin{array}{ll}
\left\{\begin{array}{ll} [x_1,x_1]=x_2, & m\geq 3\\{}
[y_i,x_1]=y_{i+1},& 1\leq i\leq m-1,\\{} [x_1,y_i]=-y_{i+1},&1\leq
i\leq m-1,\\{} [y_i,y_{m+1-i}]=(-1)^{i+1}x_2, & 1\leq i\leq m-1.
\end{array}\right.&\left\{\begin{array}{ll}
[y_i,x_1]=-[x_1,y_i]=y_{i+1},& 1\le i\le m-1,
\\[2mm] [y_{m+1},y_i]=(-1)^{i+1}x_2,& 1\le i\le \frac{m+1}2,
\end{array}\right.
\end{array}$$

\

In order to present the classification of Leibniz superalgebras
with characteristic sequence $(n-1,1 | m)$, $n \geq 3$ and
nilindex $n+m$ we need to introduce the following families of
superalgebras:

$$\bf Leib_{n,n-1}:$$
$L(\alpha_4, \alpha_5, \ldots, \alpha_n, \theta):$
$$
\left\{\begin{array}{ll}
[x_1,x_1]=x_3,& \\[1mm] {[}x_i,x_1]=x_{i+1},&    2 \le i \le n-1,
\\[1mm]
{[}y_j,x_1]=y_{j+1},&    1 \le j \le n-2,
\\[1mm]
{[}x_1,y_1]= \frac12 y_2,&
\\[1mm]
{[}x_i,y_1]= \frac12 y_i,  &    2 \le i \le n-1,
\\[1mm]
{[}y_1,y_1]=x_1,&
\\[1mm]
{[}y_j,y_1]=x_{j+1},& 2 \le j \le n-1,
\\[1mm]
{[}x_1,x_2]=\alpha_4x_4+ \alpha_5x_5+ \ldots +
\alpha_{n-1}x_{n-1}+ \theta x_n,&
\\[1mm]
{[}x_j,x_2]= \alpha_4x_{j+2}+ \alpha_5x_{j+3}+ \ldots +
\alpha_{n+2-j}x_n,& 2 \le j \le n-2,
\\[1mm]
{[}y_1,x_2]= \alpha_4y_3+ \alpha_5y_4+ \ldots +
\alpha_{n-1}y_{n-2}+\theta y_{n-1},&
\\[1mm]
{[}y_j,x_2]= \alpha_4y_{j+2}+ \alpha_5y_{j+3}+ \ldots +
\alpha_{n+1-j}y_{n-1},& 2 \le j \le n-3. \end{array} \right.$$

$G(\beta_4,\beta_5, \ldots, \beta_n, \gamma):$
$$ \left\{\begin{array}{ll}
[x_1,x_1]=x_3, \\[1mm] {[}x_i,x_1]=x_{i+1},&    3 \le i \le n-1,
\\[1mm]
{[}y_j,x_1]=y_{j+1}, &    1 \le j \le n-2,
\\[1mm]
{[}x_1,x_2]= \beta_4x_4+\beta_5x_5+\ldots+\beta_nx_n,&
\\[1mm]
{[}x_2,x_2]= \gamma x_n,& \\[1mm]
{[}x_j,x_2]= \beta_4x_{j+2}+\beta_5x_{j+3}+\ldots+\beta_{n+2-j}x_n,& 3\le j\le n-2, \\[1mm]
{[}y_1,y_1]=x_1,&
\\[1mm]
{[}y_j,y_1]=x_{j+1},& 2 \le j \le n-1,
\\[1mm]
{[}x_1,y_1]= \frac12 y_2,&
\\[1mm]
{[}x_i,y_1]= \frac12 y_i,& 3\le i\le n-1,
\\[1mm]
{[}y_j,x_2]= \beta_4y_{j+2}+\beta_5y_{j+3}+ \ldots +
\beta_{n+1-j}y_{n-1},& 1\le j\le n-3. \end{array} \right.$$
$$\bf Leib_{n,n}:$$
$M(\alpha_4, \alpha_5, \ldots, \alpha_n, \theta, \tau):$
$$ \left\{
\begin{array}{ll}
[x_1,x_1]=x_3,& \\[1mm] {[}x_i,x_1]=x_{i+1},&     2 \le i \le n-1,
\\[1mm]
{[}y_j,x_1]=y_{j+1}, &    1 \le j \le n-1,
\\[1mm]
{[}x_1,y_1]=  \frac12 y_2,&
\\[1mm]
{[}x_i,y_1]=  \frac12 y_i, &    2 \le i \le n,
\\[1mm]
{[}y_1,y_1]=x_1,&
\\[1mm]
{[}y_j,y_1]=x_{j+1},& 2 \le j \le n-1,
\\[1mm]
{[}x_1,x_2]=\alpha_4x_4+ \alpha_5x_5+ \ldots +
\alpha_{n-1}x_{n-1}+ \theta x_n,&
\\[1mm]
{[}x_2,x_2]=\gamma_4x_4,&\\[1mm]
 {[}x_j,x_2]= \alpha_4x_{j+2}+
\alpha_5x_{j+3}+ \ldots + \alpha_{n+2-j}x_n,&3 \le j \le n-2,
\\[1mm]
{[}y_1,x_2]= \alpha_4y_3+ \alpha_5y_4+ \ldots +
\alpha_{n-1}y_{n-2}+\theta y_{n-1}+\tau y_n,&
\\[1mm]
{[}y_2,x_2]= \alpha_4y_4+ \alpha_5y_4+ \ldots +
\alpha_{n-1}y_{n-1}+\theta y_n,&
\\[1mm]
{[}y_j,x_2]= \alpha_4y_{j+2}+ \alpha_5y_{j+3}+ \ldots +
\alpha_{n+2-j}y_{n},& 3 \le j \le n-2.\end{array} \right.$$

$H(\beta_4, \beta_5, \ldots,\beta_n, \delta , \gamma ):$
$$ \left\{
\begin{array}{ll}
[x_1,x_1]=x_3,& \\[1mm] {[}x_i,x_1]=x_{i+1},&     3 \le i \le n-1,
\\[1mm]
{[}y_j,x_1]=y_{j+1}, &    1 \le j \le n-2,
\\[1mm]
{[}x_1,x_2]= \beta_4x_4+\beta_5x_5+\ldots+\beta_nx_n,&
\\[1mm]
{[}x_2,x_2]= \gamma x_n, &\\[1mm]
{[}x_j,x_2]= \beta_4x_{j+2}+\beta_5x_{j+3}+\ldots+\beta_{n+2-j}x_n,& 3\le j\le n-2, \\[1mm]
{[}y_1,y_1]=x_1,&
\\[1mm]
{[}y_j,y_1]=x_{j+1},& 2 \le j \le n-1,
\\[1mm]
{[}x_1,y_1]= \frac12 y_2,&
\\[1mm]
{[}x_i,y_1]= \frac12 y_i,& 3\le i\le n-1,
\\[1mm]
{[}y_1,x_2]= \beta_4y_3+\beta_5y_4+ \ldots + \beta_ny_{n-1}+\delta y_n,& \\[1mm]
{[}y_j,x_2]= \beta_4y_{j+2}+\beta_5y_{j+3}+ \ldots +
\beta_{n+2-j}y_n,& 2\le j\le n-2. \end{array} \right.$$

Let us introduce also the following operators which act on
$k$-dimensional vectors:
$$
V^m_{j,k}(\alpha_1,\alpha_2,\ldots,\alpha_k)=( 0, 0,\ldots,
\stackrel{j-1}{0} ,1, S_{m,j}^{j+1}\alpha_{j+1},
S_{m,j}^{j+2}\alpha_{j+2},\ldots S_{m,j}^{k-1}\alpha_{k-1},
S_{m,j}^k\alpha_k),
$$ $$
V^m_{k+1,k}(\alpha_1,\alpha_2,\ldots,\alpha_k)=( 0, 0,\ldots, 0),
$$ $$
W^m_{s,k}(0,0,\ldots,\stackrel{j-1}{0},\stackrel{j}{1},S_{m,j}^{j+1}\alpha_{j+1},S_{m,j}^{j+2}\alpha_{j+2},\ldots,
S_{m,j}^k\alpha_k,\gamma)=
$$ $$
=( 0, 0,\ldots, \stackrel{j}{1} ,0,\ldots,\stackrel{s+j}{1},
S_{m,s}^{s+1}\alpha_{s+j+1}, S_{m,s}^{s+2}\alpha_{s+j+2},\ldots,
S_{m,s}^{k-j}\alpha_k, S_{m,s}^{k+6-2j}\gamma),
$$  $$
W^m_{k+1-j,k}(0,0,\ldots,\stackrel{j-1}{0},\stackrel{j}{1},S_{m,j}^{j+1}\alpha_{j+1},S_{m,j}^{j+2}\alpha_{j+2},\ldots,
S_{m,j}^k\alpha_k,\gamma)=$$ $\qquad
=(0,0,\ldots,\stackrel{j}{1},0,\ldots,1), $ $$
W^m_{k+2-j,k}(0,0,\ldots,\stackrel{j-1}{0},\stackrel{j}{1},S_{m,j}^{j+1}\alpha_{j+1},S_{m,j}^{j+2}\alpha_{j+2},\ldots,
S_{m,j}^k\alpha_k,\gamma)=$$ $\qquad
=(0,0,\ldots,\stackrel{j}{1},0,\ldots,0), $

\noindent where $k\in N,$ $1\le j\le k,$ $1\le s\le k-j,$
$\displaystyle S_{m,t}=\cos\frac{2\pi m}t+i\sin\frac{2\pi m}t$
$(m=0,1,\ldots, t-1).$

Below we present the complete list of pairwise non-isomorphic
Leibniz superalgebras with characteristic sequence equal to
$(n-1,1 | m)$  and nilindex $n+m:$
$$
\begin{array}{l} L\left( V_{j,n-3}\left( \alpha_4,\alpha_5,\ldots,
\alpha_n\right),S_{m,j}^{n-3}\theta\right),\qquad \ \
1\le j\le n-3, \\[2mm] L(0,0,\ldots,0,1), \ L(0,0,\ldots,0), \ G(0,0,\ldots,0,1), \
G(0,0,\ldots,0),
\\[2mm] G\left( W_{s,n-2}\left( V_{j,n-3}\left(
\beta_4,\beta_5,\ldots,\beta_n\right),\gamma\right)\right),\quad
1\le j\le n-3,\ 1\le s\le n-j,
\\[2mm] M\left( V_{j,n-2}\left( \alpha_4,\alpha_5,\ldots,\alpha_n\right),S_{m,j}^{n-3}\theta\right),
\qquad \ 1\le j\le n-2,
\\[2mm] M(0,0,\ldots,0,1), \ M(0,0,\ldots,0), \ H(0,0,\ldots,0,1), \
H(0,0,\ldots,0) \\[2mm] H\left( W_{s,n-1}\left( V_{j,n-2}\left(
\beta_4,\beta_5,\ldots,\beta_n\right),\gamma\right)\right),\quad
1\le j\le n-2,\ 1\le s\le n+1-j, \\\end{array}
$$
where omitted products are equal to zero.

\begin{defn} \label{d5}
For a given Leibniz algebra $A$ of the nilindex $s$ we put
$gr(A)_i = A^i / A^{i+1}, \quad 1 \leq i \leq s-1$ and $gr(A) =
gr(A)_1 \oplus gr(A)_2 \oplus \dots \oplus gr(A)_{s-1}.$ Then
$[gr(A)_i, gr(A)_j] \subseteq gr(A)_{i+j}$ and we obtain the
graded algebra $gr(A).$ The gradation constructed in this way is
called the natural gradation and if Leibniz algebra $A$ is
isomorphic to $gr(A)$ we say that the algebra $A$ is naturally
graded Leibniz algebra.
\end{defn}

Further we shall consider the basis of even part $L_0$ of the
superalgebra $L$ which corresponds with the natural gradation,
i.e. $\{x_1, \dots, x_{t_1}\}=L_0\setminus L_0^2,$ $\{x_{{t_1}+1},
\dots, x_{t_2}\}=L_0^2\setminus L_0^3, \dots, \{x_{{t_{s-2}}+1},
\dots, x_{n}\}=L_0^{s-1}.$

Since the second part of the characteristic sequence of a Leibniz
superalgebra $L$ is equal to $m$ then there exists a nilpotent
endomorphism $R_x$ ($x\in L_0\setminus L_0^2$) of the space $L_1$
such that its Jordan form consists of one Jordan block. Therefore,
we can assume the existence of an adapted basis $\{x_1, x_2,
\dots, x_n, y_1, y_2, \dots, y_n\}$ such that
$$[y_j,x_1] = y_{j+1}, \quad 1 \leq j \leq m-1. \eqno (1)$$

\section{The main result}

Let $L$ be a Leibniz superalgebra with characteristic sequence
$(n_1, \dots, n_k| m),$ $ n_1 \leq n-2$ and let $\{x_1, x_2,
\dots, x_n, y_1, y_2, \dots, y_n\}$ be the adapted basis of $L.$
In this section we shall prove that nilindex of such superalgebra
is less than $n+m.$ According to the Theorem \ref{t1} we have a
description of single-generated Leibniz superalgebras, which have
nilindex $n+m+1.$ If the number of generators is greater than two,
then evidently superalgebra has nilindex less than $n+m.$
Therefore, we should be consider case of two generators.

Note that, the case where both generators lie in even part is not
possible (since $m\neq 0$). The equality (1) implies that basic
elements $y_2, y_3, \dots, y_m$ can not to be generators.
Therefore, the first generator belongs to $L_0$ and the second one
lies in $L_1.$ Moreover, without loss of generality we can suppose
that $y_1$ is a generator. Let us find the generator of Leibniz
superalgebra $L$ which lies in $L_0.$

\begin{lem}\label{l1}
Let $L=L_0 \oplus L_1$ be a two generated Leibniz superalgebra
from $Leib_{n,m}$ with characteristic sequence equal to $(n_1,
\dots, n_k| m).$ Then $x_1$ and $y_1$ can be chosen as generators
of the $L.$
\end{lem}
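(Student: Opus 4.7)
The plan is a genericity argument in the complex affine space $L_0 \cong \mathbb{C}^n$. From the discussion preceding the lemma, $L$ has exactly two generators: one in $L_0\setminus L_0^2$ and one in $L_1$, the odd one already taken to be $y_1$. Let $\tilde x \in L_0$ denote the current even generator; since $[L,L]\cap L_0 = L_0^2 + [L_1,L_1]$, the fact that $\tilde x$ is a generator means $\tilde x \notin L_0^2 + [L_1,L_1]$. The adapted basis pins $x_1$ only up to two requirements: $x_1 \in L_0 \setminus L_0^2$, and $R_{x_1}|_{L_1}$ has a single Jordan block of size $m$ (equivalently $R_{x_1}^{m-1}|_{L_1}\neq 0$). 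So the task of the lemma is to show that $x_1$ can simultaneously be chosen to satisfy a third requirement, namely $x_1\notin L_0^2+[L_1,L_1]$.

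Next I would introduce two Zariski-open subsets of $L_0$. The set $\mathcal{O}_{\mathrm{gen}} := L_0 \setminus (L_0^2+[L_1,L_1])$ is the complement of a proper linear subspace (proper because $\tilde x$ lies outside it), hence Zariski-open and nonempty. The set $\mathcal{O}_{\mathrm{Jor}} := \{x\in L_0 \mid R_x^{m-1}|_{L_1}\neq 0\}$ is Zariski-open, cut out by non-vanishing of polynomial expressions in the matrix entries of $R_x$, and is nonempty because the original $x_1$ lies in it by the very definition of the characteristic sequence $(\ldots\,|\,m)$. Since $\mathbb{C}^n$ is irreducible, the intersection $\mathcal{O}_{\mathrm{gen}}\cap \mathcal{O}_{\mathrm{Jor}}$ is again a nonempty Zariski-open set; concretely, it contains $\tilde x + \varepsilon x_1$ for all but finitely many $\varepsilon \in \mathbb{C}$.

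I would then pick $x^\star$ in this intersection, redefine $x_1 := x^\star$, and extend $\{x_1\}$ to a basis of $L_0$ respecting the natural gradation. For the odd part I would run the analogous argument in $L_1\cong \mathbb{C}^m$: the set of odd generators is $L_1\setminus([L,L]\cap L_1)$, and the set of cyclic vectors of $R_{x_1}|_{L_1}$ is Zariski-open (non-vanishing of the determinant of $[y,\, R_{x_1}y,\, \ldots,\, R_{x_1}^{m-1}y]$). Both are nonempty---the first because $L$ has an odd generator, the second because $R_{x_1}|_{L_1}$ is a single Jordan block of size $m$---so they meet; pick $y_1$ in the intersection and set $y_{j+1}:=[y_j,x_1]$ for $1\le j\le m-1$. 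The resulting basis $\{x_1,\ldots,x_n,y_1,\ldots,y_m\}$ is adapted in the sense of relation (1), and $\{x_1,y_1\}$ generate $L$ because they project to a basis of $L/[L,L]$.

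The step I expect to be most delicate is the preliminary identification $[L,L]\cap L_0 = L_0^2 + [L_1,L_1]$, which is what linearises the even-generator condition and makes $\mathcal{O}_{\mathrm{gen}}$ Zariski-open. This should follow from the $\mathbb{Z}_2$-graded decomposition $[L,L]=[L_0,L_0]+[L_0,L_1]+[L_1,L_0]+[L_1,L_1]$ together with the inclusion $[L_0,L_1]+[L_1,L_0]\subseteq L_1$. Once that is in hand, the rest is standard Zariski-density bookkeeping over $\mathbb{C}$ and uses nothing beyond the hypotheses of the lemma.
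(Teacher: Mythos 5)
Your proposal is correct and is essentially the paper's own argument in Zariski-open clothing: the paper replaces $x_1$ by $x_1'=Ax_1+x_{i_0}$ with $x_{i_0}\in L\setminus L^2$ and $A$ sufficiently large, which is exactly your generic point $\tilde x+\varepsilon x_1$ on the line through a true generator, chosen so that both the generator condition and the single-Jordan-block condition (relation (1)) survive. Your additional re-selection of $y_1$ as a cyclic vector is harmless but unnecessary, since the paper simply keeps $y_1'=y_1$ and resets $y_j'=[y_{j-1}',x_1']$.
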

\begin{proof}
As mentioned above $y_1$ can be chosen as first generator of $L$.
If $x_1\in L\setminus L^2,$ then the assertion of the lemma is
evident. If $x_1\in L^2,$ then there exists some $i_0$ ($2\leq i_0
\leq t_1$) such that $x_{i_0}\in L\setminus L^2.$ Put
$x_1'=Ax_1+x_{i_0},$ then $x_1'$ is a generator of the
superalgebra $L$ (since $x_1'\in L\setminus L^2$). Moreover,
making transformation of the basis of $L_1$ as follows
$$y'_1 = y_1, \quad y'_j = [y'_{j-1}, x'_1], \quad 2 \leq j \leq
m$$ and taking sufficiently big value of the parameter $A$ we
preserve the equality (1). Thus, in the basis $\{x_1', x_2, \dots,
x_n, y_1', y_2', \dots, y_m'\}$ of the $L$ the elements $x_1'$ and
$y_1'$ are generators.
\end{proof}

Due to Lemma \ref{l1} further we shall suppose that $\{x_1, y_1\}$
are generators of the Leibniz superalgebra $L.$

 Let us introduce the notations:

$$[x_i,y_1]= \sum\limits_{j=2}^m \alpha_{i,j}y_j,\ 1 \le i \le n, \  \ \ [y_i,y_1]=
\sum\limits_{j=2}^n \beta_{i,j}x_j, \ 1 \le i \le m. \eqno (2)$$

Since $x_1$ and $y_1$ are generators of Leibniz superalgebra $L,$
we have
$$L = \{x_1, x_2,  \dots, x_n,
y_1, y_2, \dots, y_m\},$$ $$L^2 = \{x_2, x_3,  \dots, x_n, y_2,
y_3, \dots, y_m\}.$$

If we consider the next power of $L,$ then from the multiplication
(1), obviously, we get $\{y_3, \dots, y_m\}\subseteq L^3.$
However, do not have information about the position of the element
$y_2.$

\begin{thm}\label{t2}
Let $L=L_0 \oplus L_1$  be a Leibniz superalgebra from
$Leib_{n,m}$ with characteristic sequence equal to $(n_1, \dots,
n_k | m)$ and let $y_2 \notin L^3.$ Then $L$ has a nilindex less
than $n+m.$
\end{thm}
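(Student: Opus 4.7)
The plan is to argue by contradiction: suppose the nilindex of $L$ equals $n+m$. By Lemma~\ref{l1} the algebra $L$ is generated by $x_1$ and $y_1$, so $\dim(L/L^2)=2$. Combined with $\dim L=n+m$ and the requirement $L^{n+m-1}\ne 0=L^{n+m}$, a direct dimension count forces $\dim(L^i/L^{i+1})=1$ for every $i\ge 2$, so the associated graded algebra has dimension sequence $(2,1,1,\dots,1)$.

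Next I would exploit the parity structure. The hypothesis $y_2\notin L^3$ combined with $\dim(L^2/L^3)=1$ gives $L^2/L^3=\langle\overline{y_2}\rangle$, an entirely odd piece. Hence the even part of $L^2$ already lies in $L^3$; in particular $x_i\in L^3$ for every $2\le i\le n$, and $[x_1,x_1], [y_1,y_1]\in L^3$, while $[x_1,y_1]\equiv c\,\overline{y_2}\pmod{L^3}$ for some scalar~$c$. Since $n_1\le n-2$, the Leibniz algebra $L_0$ has at least two generators, so $x_2\in L_0\setminus L_0^2$ is a generator of $L_0$; yet the preceding analysis forces $x_2\in L^3$, a strong structural constraint.

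The core of the argument is to propagate this parity/dimension analysis down the entire filtration. Writing $\epsilon_k\in\{0,1\}$ for the parity of $L^k/L^{k+1}$ (so $\epsilon_2=1$), the chain $y_{j+1}=[y_j,x_1]$ places each $y_j$ in some $L^{d_j}$ at an odd grade; the $1$-dimensionality of each quotient, together with a count of odd grades, will force $\{d_2,\dots,d_m\}$ to coincide with the $m-1$ odd grades in $\{2,\dots,n+m-1\}$, so the $n-1$ even grades must be occupied by $x_2,\dots,x_n$. I would then apply the Leibniz superidentity to expressions such as $[y_1,[y_1,x_1]]$, $[x_1,[y_j,x_1]]$, and their iterates, obtaining congruences among the $[y_i,y_j]$-type and $[[y_1,y_1],\cdot]$-type brackets modulo higher filtration. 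Combining these with the fact that $x_2$ is a generator of $L_0$ yet sits at grade $\ge 3$ in $L$, one eventually forces some $L^k/L^{k+1}$ to vanish, contradicting the initial dimension count.

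The main obstacle will be this last step. The casework on the parity sequence $\epsilon_2,\epsilon_3,\dots$ and the Leibniz-identity calculations required to rule out each case constitute the technical core: one must track how the odd-odd brackets $[y_i,y_j]$ sustain the even part of the grading while simultaneously producing $x_2,\dots,x_n$ at distinct depths, and show that this is incompatible with the hypothesis $n_1\le n-2$ on the characteristic sequence.
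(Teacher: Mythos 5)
Your setup is sound and matches the paper's implicit framework: the two-generator hypothesis plus nilindex $n+m$ forces $\dim(L^i/L^{i+1})=1$ for $i\ge 2$, and $y_2\notin L^3$ then forces $\{x_2,\dots,x_n\}\subseteq L^3$. But everything after that is a description of a strategy rather than a proof, and you say so yourself ("the main obstacle will be this last step"). That last step is the entire content of the theorem, so as it stands there is a genuine gap. Moreover, your stated target for the contradiction is misdirected: you expect the incompatibility to come from the hypothesis $n_1\le n-2$ on the characteristic sequence, whereas in fact the contradiction is purely internal to the filtration and the Leibniz superidentity; the condition $n_1\le n-2$ plays no direct role in this particular theorem.

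Concretely, what is missing is the following mechanism. Let $s$ be the depth at which $x_2$ drops out, i.e.\ $x_2\in L^s\setminus L^{s+1}$. Since all $x_i$ ($i\ge 2$) and all $y_j$ ($j\ge s$) already lie in $L^{s+1}$ modulo the term they generate, the only product that can produce $x_2$ at depth $s$ is $[y_{s-1},y_1]$, so $\beta_{s-1,2}\ne 0$ where $[y_{s-1},y_1]=\sum_j\beta_{s-1,j}x_j$. Now let $k\ge 1$ be such that $x_3\in L^{s+k}\setminus L^{s+k+1}$ and split into two cases. If $k=1$, one may normalize $[x_2,x_1]=x_3$ and prove by induction on $j$, using $y_{j+1}=[y_j,x_1]$ and the superidentity, that $[y_{s+1-j},y_j]=(-1)^j(j-1)\beta_{s-1,2}x_3+\sum_{i\ge4}(*)x_i$ for $2\le j\le s$; specializing to $j=s$ gives a nonzero $x_3$-coefficient in $[y_1,y_s]$, which is impossible because $y_s\in L^{s+1}$ forces that coefficient to vanish, so $\beta_{s-1,2}=0$, a contradiction. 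If $k\ge 2$, then $y_s\notin L^{s+2}$ forces $\alpha_{2,s}\ne 0$ in $[x_2,y_1]=\sum_j\alpha_{2,j}y_j$, and evaluating $[[y_{s-1},y_1],y_1]$ two ways --- directly, giving the term $\beta_{s-1,2}\alpha_{2,s}y_s$, and via $[[y_{s-1},y_1],y_1]=\frac12[y_{s-1},[y_1,y_1]]\in L^{s+2}$, which kills the $y_s$-component --- yields $\beta_{s-1,2}\alpha_{2,s}=0$, again a contradiction. Without this case split and these two explicit double evaluations (or some equivalent computation), your outline does not establish the theorem.
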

\begin{proof}
Let us assume the contrary, i.e. nilindex of the superalgebra $L$
is equal to $n+m.$ Then the condition $y_2 \notin L^3$ deduce
$\{x_2, x_3, \dots, x_n\}\subseteq L^3.$ Therefore,
$$L^3 = \{x_2, x_3,  \dots, x_n, y_3, \dots, y_m\}.$$

Let $s\in\mathbb{N}$ be a number such that $x_2 \in L^s\setminus
L^{s+1},$ that is $$ L^s = \{x_2, x_3, \dots, x_n, y_s, \dots,
y_m\}, \  s \geq 3,$$
$$L^{s+1} = \{x_3, x_4, \dots, x_n, y_s, \dots, y_m\}.$$

It means that $x_2$ can be obtained only from product
$[y_{s-1},y_1]$ and thereby $\beta_{s-1,2} \neq 0.$

 Similarly, we assume that
$k$ is a number for which $x_3 \in L^{s+k}\setminus L^{s+k+1}.$
Then for the powers of superalgebra $L$ we have the following
$$L^{s+k} = \{x_3, x_4, \dots, x_n, y_{s+k-1}, \dots,
y_m\}, k \geq 1,$$
$$L^{s+k+1} = \{x_4, \dots, x_n, y_{s+k-1}, \dots, y_m\}.$$

Let us suppose $k = 1,$ then
$$L^{s+2} = \{x_4, \dots, x_n, y_{s}, \dots,
y_m\}.$$

Since $x_3 \notin L^{s+2}$ and the vector space $L^{s+1}$ is
generated by multiplying the space $L^s$ to elements $x_1$ and
$y_1$ on the right side (because of Leibniz superidentity), it
follows that element $x_3$ is obtained by the product $[x_2,x_1]$,
i.e. $[x_2, x_1] =ax_3 + \sum\limits_{i\geq 4} (*)x_i, \ a\neq0.$
Making the change of basic element as $x'_3 = ax_3 +
\sum\limits_{i\geq 4} (*)x_i,$ we can conclude that $[x_2, x_1]
=x_3.$

Let us define the products $[y_{s-j}, y_j], 1 \leq j \leq s-1.$

Applying the Leibniz superidentity and induction on $j$ we prove

$$[y_{s-j}, y_j] = (-1)^{j+1} \beta_{s-1,2}x_2 + \sum\limits_{i\geq 3}
(*)x_i. \eqno (3)$$

Indeed, for $j=1$ it is true by notation (2). Let us suppose that
equality (3) holds for $j=t.$ Then for $j = t+1$ we have
$$[y_{s-t-1}, y_{t+1}] = [y_{s-t-1}, [y_t, x_1]] =
[[y_{s-t-1}, y_t], x_1] - [[y_{s-t-1}, x_1],y_t] =$$ $$= -
[y_{s-t}, y_t] + [\sum\limits_{i\geq 2} (*)x_i, x_1] = -(-1)^{t+1}
\beta_{s-1,2}x_2 + \sum\limits_{i\geq 3} (*)x_i+$$
$$+[\sum\limits_{i\geq 2} (*)x_i, x_1]=(-1)^{t+2}
\beta_{s-1,2}x_2 + \sum\limits_{i\geq 3} (*)x_i.$$

It should be noted that in the decompositions of $[y_s, y_1]$ and
$[y_1, y_s]$ the coefficients at the basic elements $x_2$ and
$x_3$ are equal to zero.

Let us define the products $[y_{s+1-j}, y_j], 2 \leq j \leq s.$ In
fact, if $j=2,$ then

$$[y_{s-1}, y_2] = [y_{s-1}, [y_1, x_1]] = [[y_{s-1}, y_1], x_1] - [y_s, y_1] =$$ $$=[
\sum\limits_{i=2}^n\beta_{s-1,i}x_i, x_1] -
\sum\limits_{i=4}^n\beta_{s,i}x_i = \beta_{s-1,2}x_3 +
\sum\limits_{i\geq 4} (*)x_i,$$

Inductively applying the above arguments for  $j \geq 3$ and using
equality (3) we obtain

$$[y_{s+1-j}, y_j] = (-1)^j (j-1)
\beta_{s-1,2}x_3 + \sum\limits_{i\geq 4} (*)x_i, \quad 2 \leq j
\leq s. \eqno (4)$$

In particular, $[y_1, y_s] = (-1)^s (s-1) \beta_{s-1,2}x_3 +
\sum\limits_{i\geq 4} (*)x_i.$ On the other hand (as mentioned
above), in the decompositions of $[y_1, y_s]$ the coefficient at
the basic element $x_3$ is equal to zero. Therefore,
$\beta_{s-1,2}=0,$ which contradicts to the condition
$\beta_{s-1,2}\neq 0.$ Thus, our assumption $k=1$ is not possible.

Hence, $k \geq 2$ and we have
$$L^{s+2} = \{x_3, \dots, x_n, y_{s+1}, \dots,
y_m\}.$$

Since $y_s \notin L^{s+2},$ it follows that $$ \alpha_{2,s}\neq 0,
\quad \alpha_{2,j}=0 \quad \mbox{for}\quad  j<s,$$
$$\alpha_{i,j}=0, \quad \mbox{for any} \quad i \geq 3, \quad j<s+1.$$

Consider the product $$[[y_{s-1}, y_1], y_1] = \frac 1 2 [ y_{s-1},
[ y_1, y_1]] = \frac 1 2 [y_{s-1},
\sum\limits_{i=2}^n\beta_{1,i}x_i] .$$

The element $y_{s-1}$ belongs to $L^{s-1}$ and elements $x_2, x_3,
\dots, x_n$ lie in $L^3.$ Hence $\frac 1 2 [y_{s-1},
\sum\limits_{i=2}^n\beta_{1,i}x_i] \in L^{s+2}.$ Since $L^{s+2} =
\{x_3, \dots, x_n, y_{s+1}, \dots, y_m\},$ we obtain that
$[[y_{s-1}, y_1], y_1] =\sum\limits_{j\geq s+1}(*)y_j.$

On the other hand,
$$[[y_{s-1}, y_1], y_1] = [\sum\limits_{i=2}^n\beta_{s-1,i}x_i, y_1]
= \sum\limits_{i=2}^n\beta_{s-1,i}[x_i, y_1]= $$
$$=\sum\limits_{i=2}^n\beta_{s-1,i} \sum\limits_{j\geq s}\alpha_{i,j}y_j = \beta_{s-1,2}
\alpha_{2,s}y_s
 + \sum\limits_{j\geq s+1}(*)y_j.$$

Comparing the coefficients at the basic elements we obtain $
\beta_{s-1,2} \alpha_{2,s}=0,$ which contradicts the conditions
$\beta_{s-1,2}\neq 0$ and  $\alpha_{2,s} \neq 0.$

Thus, we get a contradiction with assumption that the superalgebra
$L$ has nilindex equal to $n+m$ and therefore the assertion of the
theorem holds.
\end{proof}

The investigation of the case $y_2 \in L^3$ shows that it depends
on the structure of the Leibniz algebra $L_0.$ So, we present some
remarks on naturally graded nilpotent Leibniz algebras.

Let $A = \{z_1, z_2, \dots, z_n\}$ be an $n$-dimensional nilpotent
Leibniz algebra of nilindex $p$ ($p<n$). Note that algebra $A$ is
not single-generated.

Consider the case where $gr(A)$ is a non-Lie Leibniz algebra.

\begin{lem}\label{l2}
Let $gr(A)$ be a naturally graded non-Lie Leibniz algebra. Then
$dim(gr(A)_1) + dim(gr(A)_2) \geq 4.$
\end{lem}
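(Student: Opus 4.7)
The plan is a proof by contradiction. Suppose that $\dim(gr(A)_1)+\dim(gr(A)_2)\le 3$. First I would reduce to a single structural case. Since $p<n$ and a single-generated $n$-dimensional nilpotent Leibniz algebra has nilindex $n+1$, the algebra $A$ is not single-generated, so $\dim(gr(A)_1)\ge 2$. If $\dim(gr(A)_2)=0$ then $A^2=A^3$, which by nilpotency forces $A^2=0$; but then $A$ is abelian and $gr(A)$ is Lie, contradicting the non-Lie hypothesis. The only remaining possibility is $\dim(gr(A)_1)=2$ and $\dim(gr(A)_2)=1$. Pick a basis $\{e_1,e_2\}$ of $gr(A)_1$ and $\{e_3\}$ of $gr(A)_2$ and write $[e_i,e_j]=c_{ij}e_3$ for $i,j\in\{1,2\}$, with not all $c_{ij}$ zero.

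The core of the argument uses the following general fact about Leibniz algebras: every square $[x,x]$, and hence every sum $[x,y]+[y,x]$, lies in the right annihilator. I would first treat the case when some symmetric part of the degree-two bracket is nonzero, i.e.\ one of $c_{11}$, $c_{22}$, $c_{12}+c_{21}$ is nonzero. In that case $e_3$ lies in the right annihilator of $gr(A)$, so $[e_i,e_3]=0$ for $i=1,2$, i.e.\ $[gr(A)_1,gr(A)_2]=0$. Applying the Leibniz identity $[[e_i,e_j],e_k]=[e_i,[e_j,e_k]]+[[e_i,e_k],e_j]$ yields relations of the form $c_{ij}[e_3,e_k]=c_{ik}[e_3,e_j]$, from which I would conclude that $[e_3,e_1]$ and $[e_3,e_2]$ are proportional, so $\dim(gr(A)_3)\le 1$. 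Iterating this argument up the gradation, one gets $\dim(gr(A)_k)\le 1$ for every $k\ge 2$, and therefore $n=\dim(gr(A)_1)+\sum_{k\ge 2}\dim(gr(A)_k)\le 2+(p-2)=p$, contradicting $p<n$.

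It remains to dispose of the antisymmetric subcase $c_{11}=c_{22}=0$, $c_{12}+c_{21}=0$, in which the subalgebra generated in degrees $\le 2$ is of Heisenberg type. Here I would show by induction on the degree that the bracket of $gr(A)$ is antisymmetric in every graded component: the Leibniz identity, combined with the antisymmetry already established in lower degrees and the one-dimensionality of $gr(A)_2$, forces $[u,v]+[v,u]=0$ in every degree. Then $gr(A)$ is an antisymmetric Leibniz algebra, hence a Lie algebra, contradicting the non-Lie hypothesis.

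The hardest part is this antisymmetric subcase: propagating antisymmetry of the degree-two bracket to all higher degrees in a naturally graded Leibniz algebra with $\dim(gr(A)_1)=2$ and $\dim(gr(A)_2)=1$ is delicate and relies strongly on the naturality of the gradation and on $gr(A)$ being generated by $gr(A)_1$. A secondary obstacle is the inductive step $\dim(gr(A)_k)\le 1$ in the symmetric case, which requires tracking how the right-annihilator property of $e_3$ propagates through the Leibniz identity to higher-degree elements, so that the only non-trivial product in each new degree is, up to proportionality, a single bracket $[e_{k+1},e_1]$.
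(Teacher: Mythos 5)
Your reduction to the case $\dim(gr(A)_1)=2$, $\dim(gr(A)_2)=1$ is correct, and your ``symmetric'' subcase is sound and is essentially the paper's own argument: when some square of a degree-one element is nonzero, $e_3$ lies in the right annihilator, the Leibniz identity then forces $\dim(gr(A)_k)\le 1$ for all $k\ge 2$, and $n\le p$ contradicts $p<n$ (the paper phrases this as: take the first level $t_0$ where the dimension jumps to two and show that the would-be new element $[\overline{z}_{t_0},\overline{z}_2]$ is proportional to $\overline{z}_{t_0+1}$).

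The antisymmetric subcase is a genuine gap, and the step you flag as ``delicate'' in fact fails: antisymmetry of the bracket on $gr(A)_1\times gr(A)_1$ does \emph{not} propagate to all degrees. It does propagate to degree $3$ (the Leibniz identity gives $[e_i,e_3]=-[e_3,e_i]$), but in degree $4$ the square $[e_3,e_3]=[[e_3,e_1],e_2]-[[e_3,e_2],e_1]$ need not vanish once $gr(A)_3$ is two-dimensional. Concretely, take the six-dimensional algebra graded as $\langle e_1,e_2\rangle\oplus\langle e_3\rangle\oplus\langle e_4,e_5\rangle\oplus\langle e_6\rangle$ with nonzero products
$$[e_1,e_2]=-[e_2,e_1]=e_3,\quad [e_3,e_1]=-[e_1,e_3]=e_4,\quad [e_3,e_2]=-[e_2,e_3]=e_5,$$
$$[e_4,e_2]=[e_2,e_4]=[e_3,e_3]=e_6,\quad [e_1,e_5]=-2e_6.$$
One checks directly that all instances of the Leibniz identity hold, that the algebra is naturally graded of nilindex $5<6$, and that it is non-Lie (since $[e_3,e_3]\neq 0$), while every element of $gr(A)_1$ has zero square. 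So your subcase (b) hypothesis is compatible with $gr(A)$ being non-Lie, and no induction can establish the antisymmetry you need; your proof cannot be completed along these lines. (Note that this example also shows that the paper's own proof is not airtight at exactly the same point: the unjustified sentence ``Since $gr(A)$ is non-Lie Leibniz algebra then there exists an element of $gr(A)_1$ the square of which is non zero'' is refuted by it, so the dichotomy you introduce exposes, rather than creates, the difficulty.)
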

\begin{proof}
The construction of $gr (A)$ implies that every subspace $gr (A)
_i$ for $1\leq i \leq p-1$ is not empty. Obviously, $dim (gr
(A)_1) \geq 2 $ (otherwise $p=n+1$). If the dimension of subspace
$gr (A)_1$ is greater than two, then the statement of the lemma is
true. If $dim (gr (A) _1) = 2$ and $dim (gr (A) _2) = 2,$ then the
assertion of the lemma is evident.

Let us suppose that $dim (gr (A) _1) = 2$ and $dim (gr (A) _2) =
1.$ Then taking into account the condition $p<n$ we conclude that
there exists some $t \ (t > 2)$ such that $dim (gr (A)_t) \geq 2$
(otherwise the nilindex equals $n$).

Let $t_0 \ (t_0  > 2)$ be the smallest number with condition $dim
(gr (A)_{t_0}) \geq 2. $ Then
$$gr(A)_1 = \{\overline{z}_1, \overline{z}_2\}, \ gr(A)_2 =
\{\overline{z}_3\},\  \dots, \ gr(A)_{t_0-1} =
\{\overline{z}_{t_0}\}, \ gr(A)_{t_0}= \{\overline{z}_{{t_0}+1},
\overline{z}_{{t_0}+2}\}.$$

Using the argumentation similar to the one from \cite{Ver} we
obtain that
$$\left\{\begin{array}{ll} [\overline{z}_1, \overline{z}_1] =\alpha_1
\overline{z}_3,& \\ {}[\overline{z}_2, \overline{z}_1] = \alpha_2
\overline{z}_3,& \\ {}[\overline{z}_1, \overline{z}_2] = \alpha_3
\overline{z}_3,& \\ {}[\overline{z}_2, \overline{z}_2] = \alpha_4
\overline{z}_3,& \\ {}[\overline{z}_i, \overline{z}_1] =
\overline{z}_{i+1}, & 3 \leq i \leq t_0.\end{array}\right.$$

Since $gr(A)$ is non-Lie Leibniz algebra then there exists an
element of $gr(A)_1$ the square of which is non zero. It is not
difficult to see that $ \overline{z}_3, \dots,
\overline{z}_{{t_0}+1}$ belong to the right annihilator
$\Re(gr(A)),$ which is defined as
$$\Re(gr(A))= \{\overline{z} \in gr(A) | \ [\overline{y},\overline{z}]=0 \ \mbox{for any} \
\overline{y} \in gr(A)\}.$$

Moreover, one can assume $[\overline{z}_{t_0}, \overline{z}_2] =
\overline{z}_{{t_0}+2}.$

On the other hand, $$ [\overline{z}_{t_0}, \overline{z}_2] =
[[\overline{z}_{{t_0}-1}, \overline{z}_1], \overline{z}_2] =
[\overline{z}_{{t_0}-1},[ \overline{z}_1, \overline{z}_2]] +
[[\overline{z}_{{t_0}-1}, \overline{z}_2], \overline{z}_1] =$$
$$=
[\overline{z}_{{t_0}-1}, \alpha_3\overline{z}_3] + [
\beta\overline{z}_{t_0}, \overline{z}_1] =
\beta\overline{z}_{{t_0}+1}.$$

The obtained equality $\overline{z}_{{t_0}+2} =
\beta_{{t_0}-1,2}\overline{z}_{{t_0}+1}$ derives a contradiction,
which leads to the assertion of the lemma.
\end{proof}

From the Lemma \ref{l2} the corollary follows.
\begin{cor} \label{c1} Let $A$ be a Leibniz algebra satisfying the condition of
the Lemma \ref{l2}. Then $dim(A^3 ) \leq n-4.$ \end{cor}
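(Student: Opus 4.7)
The plan is to reduce the claim directly to Lemma \ref{l2} via a telescoping dimension count on the natural gradation. The natural gradation gives a vector space decomposition $gr(A) = gr(A)_1 \oplus gr(A)_2 \oplus \cdots \oplus gr(A)_{p-1}$, and since $gr(A)_i = A^i/A^{i+1}$ and the descending sequence $A = A^1 \supseteq A^2 \supseteq \cdots \supseteq A^{p-1} \supseteq A^p = 0$ is strict until termination, the dimensions satisfy
$$\dim A = \sum_{i=1}^{p-1}\dim(gr(A)_i) = n.$$

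First I would observe that the same telescoping, truncated at $i=3$, yields
$$\dim(A^3) = \sum_{i=3}^{p-1}\dim(A^i/A^{i+1}) = \sum_{i=3}^{p-1}\dim(gr(A)_i) = n - \dim(gr(A)_1) - \dim(gr(A)_2).$$
Then I would apply Lemma \ref{l2}, which under the hypothesis that $gr(A)$ is a naturally graded non-Lie Leibniz algebra gives $\dim(gr(A)_1) + \dim(gr(A)_2) \geq 4$. Substituting yields $\dim(A^3) \leq n - 4$, as required.

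There is no real obstacle here: the corollary is essentially a book-keeping consequence of Lemma \ref{l2} together with the fact that taking associated graded preserves dimensions of the filtration quotients. The only subtlety worth verifying is that the hypothesis ``$A$ satisfies the condition of Lemma \ref{l2}'' refers precisely to $gr(A)$ being a naturally graded non-Lie Leibniz algebra, so that the lemma applies to the graded pieces associated with $A$ itself rather than to some unrelated algebra.
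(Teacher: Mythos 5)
Your proof is correct and matches the paper's intent: the paper gives no explicit argument (it simply states that the corollary follows from Lemma \ref{l2}), and your telescoping identity $\dim(A^3)=n-\dim(gr(A)_1)-\dim(gr(A)_2)$ combined with the lemma's bound is exactly the intended deduction.
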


The result on nilindex of the superalgebra under condition
$dim(L_0^3) \leq n-4$ is established in the following proposition.

\begin{prop}\label{p1} Let $L=L_0 \oplus L_1$ be a Leibniz superalgebra from $Leib_{n,m}$
with characteristic sequence $(n_1, \dots, n_k | m)$  and
$dim(L_0^3) \leq n-4.$ Then $L$ has a nilindex less than $n+m.$
\end{prop}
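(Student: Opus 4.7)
The plan is to argue by contradiction: assume the nilindex of $L$ equals $n+m$. Since $L$ is two-generated by $x_1$ and $y_1$ (Lemma \ref{l1}), we have $\dim L/L^2 = 2$, and maximality of the nilindex forces a strict one-step descent $\dim L^k / L^{k+1} = 1$ for all $k \ge 2$, so $\dim L^k = n+m-k$. By Theorem \ref{t2} we may assume $y_2 \in L^3$; otherwise the conclusion is immediate. Then the graded decomposition $L^3 = (L^3\cap L_0) \oplus (L^3\cap L_1)$, together with $\{y_2,\dots,y_m\} \subseteq L^3 \cap L_1$, gives $\dim L^3 \cap L_1 = m-1$ and therefore $\dim L^3 \cap L_0 = n-2$.

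Next I would exploit the hypothesis. Writing out
$$L^3 \cap L_0 \;=\; L_0^3 \;+\; \bigl[[L_1,L_1],L_0\bigr] \;+\; \bigl[[L_0,L_1],L_1\bigr] \;+\; \bigl[[L_1,L_0],L_1\bigr],$$
the bound $\dim L_0^3 \le n-4$ gives $\dim((L^3\cap L_0)/L_0^3) \ge 2$, so at least two linearly independent basis elements of $L^3 \cap L_0$ must be supplied by the odd-involving triple brackets. Using the notation (2), each such bracket expands as an explicit combination of the coefficients $\alpha_{i,j}$ and $\beta_{i,j}$, and so the surplus elements force nonvanishing of certain combinations of these coefficients.

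Finally, to close the argument, I would iterate the Leibniz superidentity as in equations (3)--(4) of Theorem \ref{t2}. Concretely, let $s$ be the integer with $x_2 \in L^s\setminus L^{s+1}$; then, as before, $\beta_{s-1,2}\neq 0$, and an inductive expansion of $[y_{s-j},y_j]$ and $[y_{s+1-j},y_j]$ via the superidentity yields a cascade of relations among the $\alpha_{\bullet,\bullet}$ and $\beta_{\bullet,\bullet}$. The additional requirement (from the paragraph above) that two independent new $x$-elements arise in $L^3 \cap L_0$ from odd-involving brackets will ultimately contradict the strict descent of the central series, forcing a coefficient known to be nonzero to vanish. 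The main obstacle is the case analysis on the gradation of $L_0$ compatible with $\dim L_0^3 \le n-4$: either $\dim gr(L_0)_1 \ge 3$ (so $L_0$ itself has at least three generators, whence the extra generators must come from $[L_1,L_1]$), or $\dim gr(L_0)_1 = 2$ with $\dim gr(L_0)_2 \ge 2$. Each sub-case requires tracking different products of the adapted basis, but in both the superidentity contradiction should match the structure of Theorem \ref{t2}; the bulk of the work will be the careful indexing rather than any new conceptual input.
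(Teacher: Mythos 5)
Your setup is fine --- the contradiction hypothesis, the strict descent $\dim L^k/L^{k+1}=1$ for $k\ge 2$, the reduction via Theorem \ref{t2} to the case $y_2\in L^3$ (hence $x_2\notin L^3$), and the count showing $\dim\bigl((L^3\cap L_0)/L_0^3\bigr)\ge 2$ --- and this matches the opening of the paper's argument, which phrases the same surplus as the existence of two basis elements $x_3,x_4\in L_0^2\setminus L_0^3$. The gap is in your closing paragraph, which is where all the actual work lies and where your anchor is wrong. You propose to track $x_2$, set $s$ by $x_2\in L^s\setminus L^{s+1}$, and import $\beta_{s-1,2}\neq 0$ together with the expansions (3)--(4) ``as before.'' But those belong to the regime of Theorem \ref{t2}, where $y_2\notin L^3$ forces $x_2\in L^3$ and $s\ge 3$; here $x_2\notin L^3$, so $s=2$, the element $x_2$ may perfectly well arise from $[x_1,x_1]$, no coefficient $\beta_{s-1,2}$ is forced to be nonzero, and (3)--(4) (which moreover used the normalization $[x_2,x_1]=x_3$, specific to the sub-case $k=1$ of that theorem) do not transfer. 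Nothing in your sketch identifies the ``cascade of relations'' or names the nonzero coefficient that is supposed to vanish, so the argument does not close.

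What the paper actually does with the two surplus elements: it takes $x_3,x_4\in L_0^2\setminus L_0^3$ and lets $s$ be defined by $x_3\in L^{s+1}\setminus L^{s+2}$. The decisive use of the hypothesis is that $x_4\notin L_0^3$ means $x_4$ cannot be produced by any $[x_i,x_1]$, $i\ge 3$, hence must be produced by some $[y_j,y_1]$; this pins down the shape of the descending series and yields $\alpha_{3,s}\neq 0$. Two concrete computations then finish: evaluating $[[x_3,y_1],y_1]$ directly and via $\frac{1}{2}[x_3,[y_1,y_1]]$ gives $\alpha_{3,s}\beta_{s,4}=0$, hence $\beta_{s,4}=0$ and $x_4$ survives one step further; and, with $k$ the step at which $x_4$ finally leaves the series (so $\beta_{s+k-3,4}\neq 0$), the superidentity expansion of $[y_{s+k-4},y_2]$ places a nonzero multiple of $x_4$ in $L^{s+k+1}$, contradicting $L^{s+k+1}=\{x_5,\dots,x_n,y_{s+k-2},\dots,y_m\}$. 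Your proposal would need to be rebuilt around $x_3$, $x_4$ and these two identities rather than around $x_2$.
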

\begin{proof}
Let us assume the contrary, i.e. the nilindex of the superalgebra
$L$ is equal to $n+m.$ According to the Theorem \ref{t2} we need
to consider the case where $y_2$ belongs to $L^3,$ which leads to
$x_2 \notin L^3.$ Thus, we have $$ L^3 = \{x_3, x_4, \dots, x_n,
y_2, y_3, \dots, y_m\}.$$

From the condition $dim(L_0^3) \leq n-4$ it follows that there
exist at least two basic elements, that belong to $L_0^2 \setminus
L_0^3.$ Without loss of generality, one can assume $x_3, x_4 \in
L_0^2 \setminus L_0^3.$

Let $s$ be a natural number such that $x_3 \in L^{s+1}\setminus
L^{s+2},$
 $$L^{s+1} = \{x_3, x_4, \dots, x_n, y_s, y_{s+1}, \dots,
y_m\}, \ s \geq 2,$$
$$L^{s+2} = \{x_4, \dots, x_n, y_s, y_{s+1},
\dots, y_m\}.$$

The condition $x_4 \notin  L_0^3$ implies that $x_4$ can not be
obtained by the products $[x_i, x_1],$ with $3 \leq i\leq n.$
Therefore, it is generated by products $[y_j, y_1], s \leq j \leq
m.$ Hence, $L^{s+3} = \{x_4, \dots, x_n, y_{s+1}, \dots, y_m\}$
and $y_s \in L^{s+2} \setminus L^{s+3},$ which implies
$\alpha_{3,s} \neq 0.$

Consider the chain of equalities $$[[x_3, y_1], y_1] = [\sum
\limits_{j=s}^m \alpha_{3,j}y_j, y_1] =  \sum \limits_{j=s}^m
\alpha_{3,j}[y_j, y_1]= \alpha_{3,s}\beta_{s,4}x_4 + \sum
\limits_{i\geq 5}(*)x_i.$$

On the other hand,
$$[[x_3, y_1], y_1] = \frac 1 2 [x_3, [ y_1, y_1]] =  [x_3,
\sum \limits_{i=2}^n\beta_{1,i}x_i] =  \sum
\limits_{i=2}^n\beta_{1,i} [x_3, x_i] = \sum \limits_{i\geq
5}(*)x_i.$$

Comparing the coefficients at the corresponding basic elements in
these equations we get $\alpha_{3,s}\beta_{s,4} = 0,$ which
implies $ \beta_{s,4} = 0.$ Thus, we conclude that $x_4 \in
L^{s+4}$ and
$$L^{s+4} = \{x_4, \dots, x_n, y_{s+2}, \dots, y_m\}.$$

Let $k$ ($4 \leq k \leq m-s+2$) be a natural number such that $x_4
\in L^{s+k} \setminus L^{s+k+1}.$ Then for the powers of
descending lower sequences we have

$$L^{s+k-2} = \{x_4, \dots, x_n,
y_{s+k-4}, \dots, y_m\},$$
$$L^{s+k-1} = \{x_4, \dots, x_n,
y_{s+k-3}, \dots, y_m\},$$ $$L^{s+k} = \{x_4, \dots, x_n, y_{s+k-2},
\dots, y_m\},$$ $$L^{s+k+1} = \{x_5, \dots, x_n, y_{s+k-2}, \dots,
y_m\}.$$

It is easy to see that in the decomposition $[y_{s+k-3}, y_1] =
\sum\limits_{i=4}^n \beta_{s+k-3,i}x_i$ we have $\beta_{s+k-3,4}
\neq 0.$

Consider the equalities
$$[y_{s+k-4}, y_2] = [y_{s+k-4}, [y_1, x_1]] = [[y_{s+k-4},
y_1], x_1] - [[y_{s+k-4}, x_1], y_1] =$$ $$=[ \sum\limits_{i=3}^n
\beta_{s+k-3,i}x_i, x_1] - [y_{s+k-3}, y_1]= - \beta_{s+k-3,4}x_4 +
\sum\limits_{i\geq 5}(*)x_i.$$

Since $y_{s+k-4} \in L^{s+k-2}, y_2 \in L^3$ and $\beta_{s+k-3,4}
\neq 0,$ then the element $x_4$ should lie in $L^{s+k+1},$ but it
contradicts to $L^{s+k+1} = \{x_5, \dots, x_n, y_{s+k-2}, \dots,
y_m\}.$ Thus, the superalgebra $L$ has a nilindex less than $n+m.$
\end{proof}

From Proposition \ref{p1} we conclude that Leibniz superalgebra
$L= L_0 \oplus L_1$ with the characteristic sequence $(n_1, \dots,
n_k| m)$ and nilindex $n+m$ can appear only if $dim(L_0^3) \geq
n-3.$ Taking into account the condition $n_1 \leq n-2$ and
properties of naturally graded subspaces $gr(L_0)_1$ and
$gr(L_0)_2$ we get $dim(L_0^3) = n-3.$

Let $dim(L_0^3) =n-3.$ Then $$gr(L_0)_1 = \{\overline{x}_1,
\overline{x}_2\}, \ gr(L_0)_2 = \{\overline{x}_3\}.$$

Then, by Corollary \ref{c1} the naturally graded Leibniz algebra
$gr(L_0)$ is a Lie algebra, i.e. the following multiplication
rules are true
$$
\left\{ \begin{array}{l} [\overline{x}_1,\overline{x}_1]=0, \\{}
[\overline{x}_2,\overline{x}_1]=\overline{x}_3,   \\{}
 [\overline{x}_1,\overline{x}_2]=-\overline{x}_3, \\ {}[\overline{x}_2,\overline{x}_2]=0.
\end{array}\right.
$$

Using these products for the corresponding products in the Leibniz
algebra $L_0$ with the basis $\{x_1, x_2, \dots, x_n\}$ we have

$$
\left\{ \begin{array}{l} [x_1,x_1]=\gamma_{1,4}x_4 +
\gamma_{1,5}x_5 + \dots + \gamma_{1,n}x_n,
\\{} [x_2,x_1]=x_3, \\{} [x_1,x_2]=-x_3 + \gamma_{2,4}x_4 + \gamma_{2,5}x_5 + \dots
+ \gamma_{2,n}x_n, \\ {}[x_2,x_2]=\gamma_{3,4}x_4 +
\gamma_{3,5}x_5 + \dots + \gamma_{3,n}x_n.
\end{array} \right. \eqno(5)$$

\begin{prop}\label{p2}
Let $L=L_0\oplus L_1$ be a Leibniz superalgebra from $Leib_{n,m}$
with characteristic sequence $(n_1,  \dots, n_k | m)$ and
$dim(L_0^3)=n-3.$ Then $L$ has a nilindex less than $n+m.$
\end{prop}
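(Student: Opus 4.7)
The plan is to argue by contradiction: assume $L$ has nilindex $n+m$. By Theorem \ref{t2} one may reduce to the case $y_2 \in L^3$. Since nilindex $n+m$ forces every quotient $L^k/L^{k+1}$ with $k \geq 3$ to be one-dimensional, the unique basis vector killed at the third step must be $x_2$, so
$$L^3 = \{x_3, x_4, \dots, x_n, y_2, y_3, \dots, y_m\}, \qquad x_2 \notin L^3.$$
Inspecting which products can produce an $x_2$ component in $L^3$, the relations (5) ensure that no $[x_i, x_j]$ contributes; hence $x_2 \notin L^3$ is equivalent to $\beta_{j,2} = 0$ for all $j \geq 2$, while $y_2 \in L^3$ forces $\alpha_{j_0, 2} \neq 0$ for some $j_0 \geq 2$.

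Next I would perform a dimension-by-dimension analysis of $L^k$ for $k \geq 4$, in the same style as Theorem \ref{t2} and Proposition \ref{p1}. Let $s$ be the first index with $x_3 \notin L^{s+1}$. The requirement that each $L^k/L^{k+1}$ ($k \geq 3$) be one-dimensional pins down, for $s \leq k \leq s+c$ with $c$ small, exactly which basis vectors are lost at each step, and this in turn forces further non-vanishing obligations among the coefficients $\alpha_{i,j}$ and $\beta_{i,j}$. The Lie-graded identities $[x_2,x_1] = x_3$ and $[x_1,x_2] = -x_3 + \sum_{i\geq 4}\gamma_{2,i}x_i$ coming from (5) are essential here: they let me rewrite products involving $x_3$ as iterated brackets of $x_1$ and $x_2$, opening the door for the Leibniz superidentity.

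The decisive computation parallels those in Theorem \ref{t2}. I would evaluate a carefully chosen double bracket such as $[[x_2, y_1], y_1]$ or $[[y_{s-1}, y_1], y_1]$ in two ways: directly through the definitions (2), and indirectly by moving the inner bracket through the Leibniz superidentity using the relation $2[[a, y_1], y_1] = [a, [y_1, y_1]]$ (valid for any even $a$). Comparing coefficients at a specific basis element (most plausibly $y_s$ or $x_4$) should yield an identity of the shape $\alpha_{j_0, 2}\,\beta_{s-1, \cdot} = 0$, contradicting the non-vanishing conclusions of the first step. The hardest part will be the bookkeeping of the tail corrections $\gamma_{\cdot, i} x_i$ from (5): unlike the filiform case, the Lie gradation pins down the brackets of $x_1$ and $x_2$ only modulo $L_0^3$, and one must check that these low-order corrections do not conspire to cancel the leading terms on which the contradiction relies.
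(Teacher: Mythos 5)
Your overall strategy is the right one and is essentially the paper's: assume nilindex $n+m$, reduce via Theorem \ref{t2} to $x_2\notin L^3$, track the one-dimensional quotients $L^k/L^{k+1}$, and derive a contradiction by evaluating a double bracket two ways through the Leibniz superidentity. But as written the proposal has two genuine gaps at exactly the points where the work lies. First, your intermediate conclusion ``$y_2\in L^3$ forces $\alpha_{j_0,2}\neq 0$ for some $j_0\geq 2$'' is too weak. The contradiction in the case $s=3$ requires specifically $\alpha_{2,2}\neq 0$, and to get that one must first prove $y_2\notin L^4$, i.e.\ that no product $[x_i,y_1]$ with $i\geq 3$ can produce $y_2$. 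The paper does this by writing each $x_i$ ($i\geq 3$) as a combination of brackets from $L_0$ and expanding $[[x_i,x_j],y_1]$ by the superidentity to see that only $y_t$ with $t\geq 3$ can appear. Without this step the identity $\alpha_{2,2}\beta_{2,3}=0$ you are aiming for does not follow, since the offending coefficient might sit at some $\alpha_{j_0,2}$ with $j_0\geq 3$.

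Second, your ``decisive computation'' only covers one of the two cases. Splitting on the first $s$ with $x_3\in L^{s+1}\setminus L^{s+2}$ (so $\beta_{s-1,3}\neq 0$), the double bracket $[[x_2,y_1],y_1]=\frac12[x_2,[y_1,y_1]]$ settles only $s=3$. For $s\geq 4$ the paper uses a different mechanism: it expands $[y_{s-2},y_2]=[y_{s-2},[y_1,x_1]]$ by the superidentity to exhibit a nonzero $x_3$-component $-\beta_{s-1,3}x_3$, and then invokes the filtration inclusion $[L^{s-1},L^3]\subseteq L^{s+2}$, which cannot contain $x_3$. Neither of the double brackets you name produces this contradiction directly, so the $s\geq 4$ branch is unaccounted for. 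Finally, the ``tail corrections'' from (5) that you flag as the hardest part are in fact harmless: every comparison is made at the lowest-index basis element ($x_3$ or $x_4$), and the $\gamma_{\cdot,i}x_i$ terms only perturb higher-index coefficients. The real difficulty is the case analysis on $s$ and pinning down the exact non-vanishing coefficients, which the proposal leaves unexecuted.
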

\begin{proof}
Let us suppose the contrary, i.e. the nilindex of the superalgebra
$L$ equals $n+m.$ Then by Theorem \ref{t2} we can assume $x_2
\notin L^3.$ Hence,
$$ L^2 = \{x_2, x_3,\dots, x_n, y_2, y_3, \dots, y_m\},$$
$$L^3 = \{x_3, x_4, \dots, x_n, y_2, y_3, \dots, y_m\}.$$

If $y_2 \in L^4,$ then it should be generated from the products
$[x_i, y_1], 3 \leq i \leq n,$ but elements $x_i, (3 \leq i \leq
n)$ are in $L_0^2.$ Therefore, they are generated by linear
combinations of products of elements from $L_0.$ The equalities
$$[[x_i, x_j], y_1] =  [x_i,[ x_j, y_1]] + [[x_i, y_1], x_j] =
[x_i,\sum\limits_{t=2}^m \alpha_{j,t}y_t] + [\sum\limits_{t=2}^m
\alpha_{i,t}y_t, x_i] = \sum\limits_{t\geq 3} (*)y_t$$ show that
the element $y_2$ can not be obtained by the products $[x_i, y_1],
3 \leq i \leq n,$ i.e. $y_2\notin L^4.$ Thus, we have
$$L^4 = \{x_3, x_4, \dots, x_n, y_3, \dots, y_m\}.$$

The simple analysis of descending lower sequences $L^3$ and $L^4$
derives $\alpha_{2,2} \neq 0.$

Let $s$ be a natural number such that $x_3 \in L^{s+1}\setminus
L^{s+2},$ i.e.

$$L^s = \{x_3, x_4, \dots, x_n, y_{s-1}, y_s, \dots, y_m\}, s
\geq 3,$$ $$L^{s+1} = \{x_3, x_4, \dots, x_n, y_s, y_{s+1}, \dots,
y_m\},$$ $$L^{s+2} = \{x_4, \dots, x_n, y_s, y_{s+1}, \dots, y_m\}
\ \ \mbox{and} \ \ \beta_{s-1,3} \neq 0.$$

If $s=3,$ then $\beta_{2,3} \neq 0$ and we consider the product
$$[[x_2, y_1], y_1] = [ \sum\limits_{j=2}^m
\alpha_{2,j}y_j, y_1] =  \sum\limits_{j=2}^m \alpha_{2,j}[y_j, y_1]
= \alpha_{2,2} \beta_{2,3}x_3 + \sum\limits_{i\geq 4}(*)x_4 .$$

 On the other hand, $$[[x_2, y_1], y_1] = \frac 1 2 [x_2, [ y_1, y_1]] =
\frac 1 2 [x_2, \sum\limits_{i=2}^n\beta_{1,i}x_i] =
\sum\limits_{i\geq 4}(*)x_i.$$

Comparing the coefficients at the corresponding basic elements we
get equality $\alpha_{2,2}\beta_{2,3} = 0,$ i.e. we have a
contradiction with supposition $s=3.$

If $s \geq 4,$ then consider the chain of equalities
$$[y_{s-2}, y_2] = [y_{s-2}, [y_1, x_1]] = [[y_{s-2}, y_1], x_1] -
[[y_{s-2}, x_1], y_1] =$$ $$= [
\sum\limits_{i=3}^n\beta_{s-2,i}x_i , x_1] - [y_{s-1}, y_1] = -
\beta_{s-1,3}x_3 + \sum\limits_{i\geq 4}(*)x_i.$$

Since $y_{s-2} \in L^{s-1}$ and $y_2 \in L^3$ then $x_3 \in
L^{s+2} = \{x_4, \dots, x_n, y_{s-1}, \dots, y_m\},$ which is a
contradiction with the assumption that the nilindex of $L$ is
equal to $n+m.$
\end{proof}

Summarizing the Theorem \ref{t2} and Propositions \ref{p1},
\ref{p2} we have the following result
\begin{thm}\label{t3}
Let $L=L_0 \oplus L_1$  be a Leibniz superalgebra from
$Leib_{n,m}$ with characteristic sequence equal to $(n_1, \dots,
n_k | m).$ Then the nilindex of the Leibniz superalgebra $L$ is
less than $n+m.$
\end{thm}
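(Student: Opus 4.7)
The plan is to argue by contradiction, assuming that the nilindex of $L$ equals $n+m$, and to reduce every possibility to one of the three already-proven results: Theorem \ref{t2}, Proposition \ref{p1}, and Proposition \ref{p2}. Under the standing hypothesis $n_1\le n-2$, the even part $L_0$ has at least two generators, which is essentially the only piece of information about $L_0$ that will be used directly.

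First I would invoke Theorem \ref{t2}: if $y_2\notin L^3$ the conclusion is already established, so one may assume $y_2\in L^3$. Next, I would case-split on $\dim(L_0^3)$. Because $L_0$ has at least two generators, $\dim(L_0^2)\le n-2$; and since the lower central series of a nilpotent Leibniz algebra is strictly descending until it reaches zero, one gets $\dim(L_0^3)\le n-3$. If $\dim(L_0^3)\le n-4$, then Proposition \ref{p1} delivers the contradiction at once. Otherwise $\dim(L_0^3)=n-3$, and the contrapositive of Corollary \ref{c1} forces $gr(L_0)$ to be a naturally graded Lie algebra; this pins down the structure of $L_0$ via the multiplications displayed in (5), after which Proposition \ref{p2} supplies the remaining contradiction.

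Since each branch of the case-split contradicts the assumption that the nilindex equals $n+m$, the nilindex of $L$ must be strictly less than $n+m$, proving the theorem. The argument is thus essentially a synthesis of the preceding results; the only conceptual step is verifying that the bound $\dim(L_0^3)\le n-3$ together with Corollary \ref{c1} makes the case-split exhaustive under the hypothesis $n_1\le n-2$. I do not anticipate any genuine obstacle here, since the delicate technical work (the Leibniz-superidentity manipulations that close off each branch) has already been absorbed into Theorem \ref{t2} and Propositions \ref{p1} and \ref{p2}.
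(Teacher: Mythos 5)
Your proof is correct and takes essentially the same route as the paper, which obtains Theorem \ref{t3} precisely by summarizing Theorem \ref{t2} and Propositions \ref{p1} and \ref{p2}, using the fact that $L_0$ has at least two generators (since $n_1\le n-2$) to force $\dim(L_0^3)\le n-3$ and thereby make the case split exhaustive. No issues.
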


\end{document}